\documentclass[10pt,reqno]{amsart}
\usepackage{amsmath}
\usepackage{amssymb}
\usepackage{amsthm}
\usepackage{mathrsfs}

\newlength{\defbaselineskip}
\newcommand{\setlinespacing}[1]%
           {\setlength{\baselineskip}{#1 \defbaselineskip}}

\numberwithin{equation}{section}

\newtheorem{thm}{Theorem}[section]

\newtheorem{lem}[thm]{Lemma}

\theoremstyle{definition}

\theoremstyle{remark}

\numberwithin{equation}{section}

\begin{document}
\title[Weighted estimates involving singular weights]{On weighted estimates involving singular weights for the Dirac equation}

\author{Seongyeon Kim and Ihyeok Seo}

\subjclass[2010]{Primary: 35B45; Secondary: 35Q41}
\keywords{weighted estimates, Dirac equation}

\address{Department of Mathematics Education, Jeonju University, Jeonju 55069, Republic of Korea}
\email{sy\_kim@jj.ac.kr}

\address{Department of Mathematics, Sungkyunkwan University, Suwon 16419, Republic of Korea}
\email{ihseo@skku.edu}

\begin{abstract}
We obtain weighted $L^2$ estimates for solutions to the Dirac equation
with weights in the Kerman-Sawyer class. This class has been introduced as a necessary and sufficient condition for the boundedness of fractional integral operators in weighted $L^2$ spaces. 
\end{abstract}

\maketitle

\section{Introduction}		
In relativistic quantum mechanics, the state of a free electron with mass $m > 0$ is described by a wave function $u(x,t):\mathbb{R}^{3+1} \rightarrow \mathbb{C}^{4}$ governed by the Dirac equation:
\begin{equation}\label{DE}
\begin{cases}
-i\partial_{t}u+\mathcal{D}u+m\beta u=0,\\
u(x,0)=f(x),
\end{cases}
\end{equation}
where the Dirac operator $\mathcal{D}$ is a first-order differential operator defined by
\begin{equation*}
\mathcal{D} = -i \sum_{j=1}^{3} \alpha_j \partial_j = -i \alpha \cdot \nabla,
\end{equation*}
with $4 \times 4$ complex matrices $\alpha_j$ and $\beta$ (called the Dirac matrices) given by
\begin{equation*}
\alpha_{j}=
\begin{pmatrix}
0 & \sigma_{j}\\
\sigma_{j} & 0
\end{pmatrix},
\quad
\beta=
\begin{pmatrix}
I_{2} & 0\\
0 & -I_{2}
\end{pmatrix},
\end{equation*}
constructed from the Pauli matrices:
\begin{equation*}
\sigma_{1}=
\begin{pmatrix}
0 & 1\\
1 & 0
\end{pmatrix},
\quad
\sigma_{2}=
\begin{pmatrix}
0 & -i\\
i & 0
\end{pmatrix},
\quad
\sigma_{3}=
\begin{pmatrix}
1 & 0\\
0 & -1
\end{pmatrix}.
\end{equation*}
These matrices satisfy the following identities for $j,k=1,2,3$:
\begin{equation}\label{matid}
\beta^{2} = \alpha_{j}^{2} = I_{4}, \quad \alpha_{j}\beta + \beta\alpha_{j} = 0, \quad \alpha_{j}\alpha_{k} + \alpha_{k}\alpha_{j} = 2\delta_{jk}I_{4}.
\end{equation}
See, for instance, \cite{Th} for a more detailed introduction to the Dirac equation.

In this paper, we are concerned with a weighted $L^2$ estimate for solutions to the Dirac equation involving singular weights $w \ge 0$. In particular, we focus on weights in the Kerman–Sawyer class $\mathcal{KS}$, defined by
\[
\|w\|_{\mathcal{KS}} := \sup_Q \left( \int_Q w(x) dx \right)^{-1} \int_Q \int_Q \frac{w(x)w(y)}{|x-y|} dxdy < \infty,
\]
where the supremum is taken over all dyadic cubes $Q \subset \mathbb{R}^3$.
This class, introduced by Kerman and Sawyer \cite{KeS}, provides a necessary and sufficient condition on weights for the validity of weighted $L^2$ estimates for fractional integral operators.

It is closely related to the Kato ($\mathcal{K}$) and Rollnik ($\mathcal{R}$) classes, which play a fundamental role in spectral and scattering theory (see \cite{K, Si}). Their definitions are given by
\begin{equation*}
w \in \mathcal{K} \quad \Leftrightarrow \quad \|w\|_{\mathcal{K}} := \sup_{x \in \mathbb{R}^3} \int_{\mathbb{R}^3} \frac{w(y)}{|x-y|} dy < \infty
\end{equation*}
and
\begin{equation*}
w \in \mathcal{R} \quad \Leftrightarrow \quad \|w\|_{\mathcal{R}} := \left( \int_{\mathbb{R}^3} \int_{\mathbb{R}^3} \frac{w(x)w(y)}{|x - y|^2} dx dy \right)^{1/2} < \infty.
\end{equation*}
In particular, it is known that $\mathcal{K} \subset \mathcal{KS}$ and $\mathcal{R} \subset \mathcal{KS}$.
The class $\mathcal{KS}$ also contains the Fefferman–Phong class $\mathcal{F}^p$, defined for $1 < p <3/2$ as
\[
\|w\|_{\mathcal{F}^p} := \sup_{x \in \mathbb{R}^3,\, r > 0} r^{2 - 3/p} \left( \int_{|y - x| < r} w(y)^p dy \right)^{1/p} < \infty.
\]
Note that $|x|^{-2} \in L^{3/2,\infty} \subsetneq \mathcal{F}^p\subset \mathcal{KS}$
for $1 < p <3/2$.

Let $L^2(w)$ denote the weighted $L^2$ space equipped with the norm
\[
\|f\|_{L^2(w)} := \left( \int |f(x)|^2 w(x)\, dx \right)^{1/2}.
\]
Our main result is the following:

\begin{thm}\label{thm}
Let $w \in \mathcal{KS}$ and $f\in H^{1/2}$. Then the solution $u$ to the Dirac equation \eqref{DE} satisfies 
\begin{equation}\label{con}
u \in C([0,\infty); H^{1/2})
\end{equation}
and
\begin{equation*}
\|u\|_{L^2_{x,t}(w)} \lesssim \|w\|_{\mathcal{KS}}^{1/2} \|f\|_{H^{1/2}}.
\end{equation*}
\end{thm}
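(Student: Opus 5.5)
Continuity is standard: the solution is $u(t)=e^{-it(\mathcal{D}+m\beta)}f$. The operator $\mathcal{D}+m\beta$ is self-adjoint and commutes with Fourier multipliers. By \eqref{matid}, its symbol $\alpha\cdot\xi+m\beta$ squares to $(|\xi|^2+m^2)I_4$. Hence the propagator is unitary on $H^{1/2}$ and strongly continuous, which gives \eqref{con}.

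For the weighted estimate, we use \eqref{matid} to diagonalize via the projections
\[
\Pi_\pm(\xi)=\tfrac12\Big(I_4\pm\frac{\alpha\cdot\xi+m\beta}{\langle\xi\rangle_m}\Big),\qquad \langle\xi\rangle_m=\sqrt{m^2+|\xi|^2}.
\]
Then
\[
u=e^{-it\langle D\rangle_m}\Pi_+(D)f+e^{it\langle D\rangle_m}\Pi_-(D)f.
\]
The projections are bounded on $H^{1/2}$, so it suffices to prove the Klein--Gordon-type estimate
\[
\|e^{\pm it\langle D\rangle_m}g\|_{L^2_{x,t}(w)}\lesssim \|w\|_{\mathcal{KS}}^{1/2}\|g\|_{H^{1/2}}.
\]
The time integral in the statement is over $t\ge0$; we bound it by the integral over $t\in\mathbb{R}$.

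We prove this by a $TT^*$ argument. Set $Tg=w^{1/2}e^{it\langle D\rangle_m}\langle D\rangle^{-1/2}g$, mapping $L^2_x\to L^2_{x,t}$. Applying Plancherel in $t$, the operator $TT^*$ localizes to the sphere $\{\langle\xi\rangle_m=\tau\}$, and we obtain
\[
\|Tg\|^2_{L^2_{x,t}}=2\pi\int_{m}^\infty \int_{\mathbb{R}^3}\big|\widehat{\cdots}\big|\ldots
\]
Equivalently, the estimate reduces to a trace-type inequality. For each $\tau$ with $\rho=\sqrt{\tau^2-m^2}$, we need
\[
\int_{|\xi|=\rho}|\widehat{w^{1/2}F}(\xi)|^2\,d\sigma(\xi)\,\frac{\tau\,d\tau}{\rho\,\langle\rho\rangle}\cdots,
\]
and integrating in $\tau$ reconstructs the whole of $\mathbb{R}^3$. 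The change of variables $d\tau=\rho\,d\rho/\tau$ exactly cancels the Jacobian. The weight left over is $\langle\xi\rangle^{-1}\le|\xi|^{-1}$.

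Concretely, the dual inequality we aim for is
\[
\int_{\mathbb{R}^3}\frac{|\widehat{w^{1/2}F}(\xi)|^2}{|\xi|}\,d\xi\lesssim\|w\|_{\mathcal{KS}}\|F\|_{L^2}^2,
\]
that is, boundedness of $F\mapsto I_1^{1/2}(w^{1/2}F)$, or equivalently
\[
\int\!\!\int\frac{w^{1/2}(x)F(x)\,w^{1/2}(y)\overline{F(y)}}{|x-y|^2}\,dx\,dy\lesssim\|w\|_{\mathcal{KS}}\|F\|_2^2,
\]
since $|\xi|^{-1}$ is the Fourier transform of $c|x|^{-2}$ in $\mathbb{R}^3$. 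This is equivalent to the weighted inequality $\|I_1 h\|_{L^2(w)}\lesssim \|h\|_{L^2}$ for the fractional integral $I_1=(-\Delta)^{-1/2}$, with constant squared comparable to $\|w\|_{\mathcal{KS}}$. This is precisely the Kerman--Sawyer theorem \cite{KeS}: in $\mathbb{R}^3$, the kernel $|x-y|^{-1}$ in the definition of $\mathcal{KS}$ is that of $I_2=I_1^2$, and testing $\|I_1(\cdot)\|_{L^2(w)}$ on $\chi_Q w$ via duality gives exactly the $\mathcal{KS}$ condition.

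The main obstacle is making the $TT^*$ reduction rigorous and uniform in the mass $m$. One must carefully check that the Jacobian from $\tau\mapsto\rho$ and the factor $\langle\xi\rangle^{-1}$ combine into a multiplier dominated by $|\xi|^{-1}$. The cleanest route is to write $\|Tg\|^2$ via Plancherel in $t$ as
\[
\int_m^\infty\Big\|\,\widehat{(\cdot)}\Big\|^2_{L^2(S_\rho)}\,d\tau,
\]
and then apply the restriction/trace bound sphere by sphere in dual form. We also bound the $\mathcal{KS}$ dependence linearly, as the dyadic-cube supremum version of Kerman--Sawyer requires.
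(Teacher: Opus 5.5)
Your continuity argument and the reduction via $\Pi_\pm$ to the scalar flows $e^{\pm it\langle D\rangle}$ match the paper. The gap is in the weighted estimate. After Plancherel in $t$, the $\tau$-integral does not reassemble a single function on $\mathbb{R}^3$. In dual form, $\|T^*G\|_2^2$ equals, up to constants, $\int\langle\xi\rangle^{-1}|\mathcal{F}_{x,t}(w^{1/2}G)(\xi,\langle\xi\rangle)|^2\,d\xi$. On the sphere $|\xi|=\rho(\tau)$ the function being transformed is $F_\tau=\mathcal{F}_tG(\cdot,\tau)$, which changes with $\tau$, while $\|G\|_{L^2_{x,t}}^2\sim\int\|F_\tau\|_2^2\,d\tau$. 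Using one $F$ on every slice is not what $TT^*$ produces. What you actually need is a bound on each sphere, uniform in the radius with the correct power: $\|\widehat{g\,d\sigma_\rho}\|_{L^2(w)}^2\lesssim\rho\,\|w\|_{\mathcal{KS}}\|g\|_{L^2(\mathbb{S}^2_\rho)}^2$. Inserted into the Plancherel identity, with its Jacobian $\langle\xi\rangle/|\xi|$, this gives exactly $\|w\|_{\mathcal{KS}}\int\langle\xi\rangle|\widehat f|^2$. This is the weighted restriction estimate the paper imports (Corollary 3.2 of Seo), and it is where $\mathcal{KS}$ really enters. The $TT^*$ kernel of $g\mapsto w^{1/2}\widehat{g\,d\sigma_\rho}$ is $w^{1/2}(x)\widehat{d\sigma_\rho}(x-y)w^{1/2}(y)$, and $|\widehat{d\sigma_\rho}(z)|=4\pi\rho\,|\sin(\rho|z|)|/|z|\le 4\pi\rho|z|^{-1}$. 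So the kernel is dominated by $\rho$ times the $|x-y|^{-1}$ form that Kerman--Sawyer controls. Your closing remark about working sphere by sphere points in this direction, but that estimate, its $\rho$-dependence, and its derivation are the substance of the proof, and they are missing.

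The inequality you do state, $\int_{\mathbb{R}^3}|\widehat{w^{1/2}F}(\xi)|^2|\xi|^{-1}\,d\xi\lesssim\|w\|_{\mathcal{KS}}\|F\|_2^2$, is false for $\mathcal{KS}$ weights. It is equivalent to $\|(-\Delta)^{-1/4}h\|_{L^2(w)}\lesssim\|h\|_2$, not to the $I_1$ bound. The kernel $|x-y|^{-2}$ is that of $I_1=I_{1/2}^2$. The $\mathcal{KS}$ kernel $|x-y|^{-1}$ is that of $I_2=I_1^2$, and it characterizes $\|I_1h\|_{L^2(w)}\lesssim\|h\|_2$. So you are off by half a derivative. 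Scaling confirms this: for $w=|x|^{-2}\in\mathcal{KS}$ and $F_\lambda=\lambda^{3/2}F(\lambda\,\cdot)$, we have $\|F_\lambda\|_2=\|F\|_2$, while your left-hand side grows like $\lambda$. Likewise, replacing $\langle\xi\rangle^{-1}$ by $|\xi|^{-1}$ loses at low frequencies. It would amount to $\|e^{it\langle D\rangle}f\|_{L^2_{x,t}(w)}\lesssim\||D|^{1/2}f\|_2$, which fails for $w=|x|^{-2}$ when $\widehat f$ is concentrated in a thin shell near a sphere of small radius $r_0$; there the ratio blows up like $1/r_0$.
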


While such weighted estimates have been extensively studied for the Schr\"odinger equation
(\cite{RV,BBCRV,S,S2}), the fundamental equation of non-relativistic quantum mechanics,
they have---to the best of our knowledge---not been addressed for its relativistic counterpart,
the Dirac equation, in general function spaces beyond certain specific power-type weights such as
\[
w_\varepsilon(x)=\bigl(|x|^{1/2-\varepsilon}+|x|\bigr)^{-2},
\qquad 0<\varepsilon\ll1,
\]
(see, e.g., \cite{DAF}).

It is worth noting that this specific weight belongs to neither the Kato nor the Rollnik class,
while it does belong to the Kerman--Sawyer class. Indeed, since
$w_\varepsilon(x)\sim |x|^{-1+2\varepsilon}$ near the origin and
$w_\varepsilon(x)\sim |x|^{-2}$ at infinity, and $w_\varepsilon$ is radial and decreasing,
it is readily verified from the definition, by considering balls centered at the origin, that
$w_\varepsilon\in\mathcal{F}^p\subset\mathcal{KS}$ for $1<p<3/2$.
On the other hand, $w_\varepsilon\notin\mathcal{K}$, as can be seen by taking $x=0$
in the definition of the Kato norm. Moreover, $w_\varepsilon\notin\mathcal{R}$.
Indeed, for the dyadic annulus
$A_R=\{x\in \mathbb R^3 :R<|x|<2R\}$ with $R\ge1$, we have
$w_\varepsilon(x)\sim w_\varepsilon(y)\sim R^{-2}$,
$|x-y|\lesssim R$, and $|A_R|\sim R^3$, which yields divergence of the Rollnik norm.

Thus, the Kerman--Sawyer framework adopted here not only naturally encompasses this
previously known power-type weight, but also allows us to treat all singular weights
in the Kato and Rollnik classes, which have not previously
been covered in the context of the Dirac equation.

\medskip

Throughout the paper, the symbol $C$ denotes a positive constant that may vary from line to line. We use the notation $A \lesssim B$ to mean $A \leq C B$ for some unspecified constant $C > 0$.

\section{Proof of Theorem \ref{thm}}\label{sec2}
For simplicity, we rescale the variables by setting $m = 1$.
Now we consider the Dirac equation
\begin{equation}\label{inDE}
\begin{cases}
-i\partial_{t}u + \mathcal{D}u + \beta u = 0, \\
u(x,0) = f(x).
\end{cases}
\end{equation}

To proceed, we define the projection operators $\Pi_{\pm}(D)$ by
\[
\Pi_{\pm}(D) = \frac{1}{2} \left(I \pm \frac{1}{\langle D \rangle}(\mathcal{D} + \beta)\right),
\]
where $D = -i\nabla$ and $\langle D \rangle = \sqrt{1 - \Delta}$. 
Then,
\[
\mathcal{D}+ \beta = \langle D \rangle (\Pi_+(D) - \Pi_-(D)),
\]
and using the matrix identities in \eqref{matid}, one observes
$$(\mathcal{D}+\beta)^2=I_4 -\Delta$$ 
which leads to
\begin{equation}\label{orth}
\Pi_{\pm}(D)^2 = \Pi_{\pm}(D) \quad\text{and}\quad \Pi_{+}(D)\Pi_{-}(D) = \Pi_{-}(D)\Pi_{+}(D) = 0.
\end{equation}

We denote $u_{\pm} = \Pi_{\pm}(D)u$ and $f_{\pm} = \Pi_{\pm}(D)f$.
Applying $\Pi_{\pm}(D)$ to equation \eqref{inDE} yields the following system:
\begin{equation*}
\begin{cases}
-i\partial_t u_{\pm} \pm \langle D \rangle u_{\pm} = 0, \\
u_{\pm}(x,0) = f_{\pm},
\end{cases}
\end{equation*}
whose solution is given by
\begin{equation*}
u_{\pm}(x,t) = e^{\mp it \langle D \rangle}f_{\pm}(x)
\end{equation*}
where
\[
e^{\mp it \langle D \rangle}f_{\pm}(x) = \frac{1}{(2\pi)^3} \int_{\mathbb{R}^{3}} e^{i(x \cdot \xi \mp t\langle \xi \rangle)} \widehat{f_{\pm}}(\xi)\, d\xi.
\]

If we define $e^{-it(\mathcal{D}+\beta)}$ by
$$e^{-it(\mathcal{D}+\beta)}f=e^{- it \langle D \rangle}f_{+}(x)+
e^{it \langle D \rangle}f_{-}(x),$$
then the solution to \eqref{inDE} is given by
\begin{equation*}
u(x,t) = e^{-it (\mathcal{D}+\beta)}f(x)
\end{equation*}
since $u=u_+ + u_-$.
Hence, we only need to prove 
\begin{equation*}
\sup_{t\in\mathbb{R}}\|e^{-it (\mathcal{D}+\beta)}f\|_{H^{1/2}}\lesssim\|f\|_{H^{1/2}},
\end{equation*}
which implies \eqref{con}, and
\begin{equation*}
\|e^{-it (\mathcal{D}+\beta)}f\|_{L_{x,t}^2(w)}\lesssim\|w\|_{\mathcal{KS}}^{1/2}\|f\|_{H^{1/2}}.
\end{equation*}
Since $\|f\|_{H^{1/2}} \sim \|f_+\|_{H^{1/2}} + \|f_-\|_{H^{1/2}}$ by \eqref{orth},
these estimates follow now immediately from the following estimates for the scalar flows $e^{\pm it\langle D \rangle}$:
\begin{equation*}
\sup_{t\in\mathbb{R}}\|e^{\pm it \langle D \rangle}f\|_{H^{1/2}}\lesssim\|f\|_{H^{1/2}}
\end{equation*}
and
\begin{equation*}
\|e^{\pm it\langle D \rangle}f\|_{L_{x,t}^2(w)}\lesssim\|w\|_{\mathcal{KS}}^{1/2}\|f\|_{H^{1/2}}.
\end{equation*}

The first estimate is obtained directly from using the Plancherel theorem, whereas the second estimate will be addressed separately below. 

\begin{lem}\label{KGWL}
Let $w\in \mathcal{KS}$ and $f\in H^{1/2}$. Then we have
\begin{equation}\label{homo}
\big\| e^{\pm it\langle D \rangle}f\big\|_{L_{x,t}^2 (w)} \lesssim \|w\|_{\mathcal{KS}}^{1/2} \|\langle D \rangle^{1/2}f\|_{L^2}.
\end{equation}
\end{lem}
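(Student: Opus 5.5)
We prove \eqref{homo} by a $TT^*$ argument in the time-frequency variables, which reduces it to the Kerman--Sawyer characterization of weighted $L^2$ bounds for a fractional integral.

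Set $Tf=e^{\pm it\langle D\rangle}\langle D\rangle^{-1/2}f$. Then \eqref{homo} is equivalent to $\|Tf\|_{L^2_{x,t}(w)}\lesssim \|w\|_{\mathcal{KS}}^{1/2}\|f\|_{L^2}$. By duality this is equivalent to
\[
\|T^*(w^{1/2}G)\|_{L^2}\lesssim\|w\|_{\mathcal{KS}}^{1/2}\|G\|_{L^2_{x,t}}.
\]
Instead of computing the oscillatory kernel of $TT^*$ in physical space, we use Plancherel in $t$.

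Writing $F=w^{1/2}G$, we have
\[
\widehat{T^*F}(\xi)=\langle\xi\rangle^{-1/2}\widetilde F(\xi,\pm\langle\xi\rangle),
\]
where $\widetilde F$ is the space-time Fourier transform. Hence
\[
\|T^*F\|_{L^2}^2=\int \langle\xi\rangle^{-1}|\widetilde F(\xi,\pm\langle\xi\rangle)|^2\,d\xi .
\]
Now take the Fourier transform in $t$ only, obtaining $\widehat F(x,\tau)$ for each fixed $\tau$. By the coarea formula in $|\xi|$, the restriction to the hyperboloid $\tau=\pm\langle\xi\rangle$ becomes a restriction to the spheres $|\xi|=r$ with $r=\sqrt{\tau^2-1}$, $|\tau|\ge1$. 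The measure transforms as $d\xi=r^2\,dr\,d\sigma$ with $dr=\frac{|\tau|}{r}\,d\tau$. Together with the factor $\langle\xi\rangle^{-1}=|\tau|^{-1}$ this gives
\[
\|T^*F\|_{L^2}^2=c\int_{|\tau|\ge1}\int_{S^2}\bigl|\widehat{\widehat F(\cdot,\tau)}(r\omega)\bigr|^2\,r\,d\sigma(\omega)\,d\tau .
\]
It therefore suffices to prove, uniformly in $r>0$, the weighted trace inequality
\[
r\int_{S^2}\bigl|\widehat{w^{1/2}g}(r\omega)\bigr|^2 d\sigma(\omega)\lesssim\|w\|_{\mathcal{KS}}\|g\|_{L^2}^2 .
\]
Plancherel in $\tau$ then finishes the argument.

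The left-hand side of the trace inequality equals
\[
r^{-1}\iint w^{1/2}(x)g(x)\,\overline{w^{1/2}(y)g(y)}\;\widehat{d\sigma_{r}}(x-y)\,dx\,dy ,
\]
where $d\sigma_r$ is surface measure on the sphere of radius $r$. We have $\widehat{d\sigma_r}(z)=4\pi r\,\frac{\sin(r|z|)}{|z|}$, so that $r^{-1}|\widehat{d\sigma_r}(z)|\lesssim|z|^{-1}$ uniformly in $r$. This uniformity is where the $\langle D\rangle^{1/2}$ normalization pays off: the $1/r$ scaling exactly cancels. Consequently the left-hand side is bounded by
\[
\iint \frac{w^{1/2}(x)|g(x)|\,w^{1/2}(y)|g(y)|}{|x-y|}\,dx\,dy=\langle I_2(w^{1/2}|g|),\,w^{1/2}|g|\rangle ,
\]
where $I_2$ is the Riesz potential of order $2$ on $\mathbb{R}^3$. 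By positivity of the kernel, this equals $\|I_1(w^{1/2}|g|)\|_{L^2}^2$, up to a constant.

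The proof thus reduces to the inequality
\[
\|I_1(w^{1/2}h)\|_{L^2}\lesssim\|w\|_{\mathcal{KS}}^{1/2}\|h\|_{L^2}.
\]
By duality this is equivalent to $\|I_1\phi\|_{L^2(w)}\lesssim\|w\|_{\mathcal{KS}}^{1/2}\|\phi\|_{L^2}$, which is exactly the Kerman--Sawyer theorem \cite{KeS}. That theorem states that this weighted bound for $I_1$ on $\mathbb{R}^3$ holds with constant comparable to $\|w\|_{\mathcal{KS}}^{1/2}$, the testing condition being expressed via dyadic cubes.

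The main obstacle is the passage from a restriction to the sphere to a uniform $|x-y|^{-1}$ kernel bound without losing the $r$-uniformity. The low-frequency region, $r\to0$ (that is, $|\tau|\to1$), must be handled by the same pointwise bound $|\sin s|\le s$, which is uniform. We must also justify the Fubini and coarea manipulations. For this we first take $G$ Schwartz and $w$ bounded with compact support, then pass to the limit by monotone convergence.
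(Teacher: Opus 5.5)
Your proof is correct. Its skeleton is the paper's: Plancherel in $t$, the parametrization $r=\sqrt{\tau^2-1}$ of the hyperboloid (whose Jacobian $\tau/r$ combines with $\langle\xi\rangle^{-1}$ to leave the factor $r$), and then a sphere restriction estimate that is uniform in $r$ with scaling $r^{1/2}$.

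The genuine differences are two.
\begin{itemize}
\item You work on the adjoint side, while the paper computes on the primal side: it rewrites $e^{it\langle D\rangle}f$ as a one-dimensional Fourier integral in $r=\langle\xi\rangle$.
\item More substantively, the paper does not prove the restriction step. It quotes $\|\widehat{f\,d\sigma_r}\|_{L^2(w)}\lesssim r^{1/2}\|w\|_{\mathcal{KS}}^{1/2}\|f\|_{L^2(\mathbb{S}^2_r)}$ from Corollary 3.2 of \cite{S}. The dual of that estimate is exactly your trace inequality $r\int_{\mathbb{S}^2}|\widehat{w^{1/2}g}(r\omega)|^2\,d\sigma(\omega)\lesssim\|w\|_{\mathcal{KS}}\|g\|_{L^2}^2$.
\end{itemize}

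You instead derive that inequality directly from $\widehat{d\sigma_r}(z)=4\pi r\sin(r|z|)/|z|$ and the Kerman--Sawyer trace theorem for $I_1$. This makes the lemma self-contained modulo \cite{KeS}. It also shows why $\mathcal{KS}$ with kernel $|x-y|^{-1}$ is the natural class: the bound $r^{-1}|\widehat{d\sigma_r}(z)|\le 4\pi|z|^{-1}$ is exactly scale-invariant. The price is that this pointwise domination is special to $\mathbb{R}^3$. In $\mathbb{R}^n$ with $n\ge 4$, $r^{-1}|\widehat{d\sigma_r}(z)|$ is not dominated by $|z|^{2-n}$ uniformly in $r$, whereas the paper simply uses the cited estimate as a black box.

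Two small slips, neither affecting validity:
\begin{itemize}
\item The small-$r$ regime needs no separate treatment, and the relevant bound there is again $|\sin s|\le 1$. The bound $|\sin s|\le s$ would only give $r^{-1}|\widehat{d\sigma_r}(z)|\le 4\pi r$, which is not what you need.
\item In the truncation step, apply the Kerman--Sawyer bound for $w$ itself, via $\|I_1\phi\|_{L^2(w_N)}\le\|I_1\phi\|_{L^2(w)}$, rather than for $w_N$. Otherwise you would have to argue separately that $\|w_N\|_{\mathcal{KS}}\lesssim\|w\|_{\mathcal{KS}}$, which is true only through the two-sided Kerman--Sawyer equivalence.
\end{itemize}
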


\begin{proof}
By the change of variables $t \mapsto -t$, it is enough to prove \eqref{homo} for $e^{it\langle D \rangle}$ only.
Using polar coordinates $\xi \rightarrow r\sigma$ and a change of variables $\sqrt{1+r^2} \rightarrow r$, we have
\begin{align*}
e^{it\langle D \rangle}f(x) &=  \int_{\mathbb{R}^3} e^{ix\cdot\xi} e^{it\sqrt{1+|\xi|^{2}}} \widehat{f}(\xi)\,d\xi\\
&= \int_{0}^{\infty} \int_{\mathbb{S}_{r}^{2}} e^{ix\cdot r\sigma} e^{it\sqrt{1+r^{2}}} \widehat{f}(r\sigma) \,d\sigma_{r}dr\\
&=\int_{1}^{\infty} \int_{\mathbb{S}_{\sqrt{r^2-1}}^{2}} e^{ix\cdot\sqrt{r^2-1}\sigma} e^{itr}\widehat{f}(\sqrt{r^2-1}\sigma)
\frac{r}{\sqrt{r^2-1}} \,d\sigma_{\sqrt{r^2-1}}\,dr\\
&=\int_{-\infty}^\infty e^{itr}\chi_{(1,\infty)}(r) \frac{r}{\sqrt{r^2-1}} \big(\widehat{f}d\sigma_{\sqrt{r^2-1}}\big)^{\wedge}(-x)\,dr.
\end{align*}

By applying Plancherel's theorem in the $t$-variable and then using a change of variables $ \sqrt{r^2-1} \rightarrow r$ again, we see
\begin{align}\label{homopf1}
\big\| e^{it\langle D \rangle}f \big\|_{L_{x,t}^{2}(w)}^{2} &=\int_{\mathbb{R}^3}\int_{1}^{\infty}
\bigg|\frac{r}{\sqrt{r^2-1}} \big(\widehat{f}d\sigma_{\sqrt{r^2-1}}\big)^{\wedge}(-x)\bigg|^{2}w(x)\,drdx\nonumber\\
&=\int_{0}^{\infty}  \int_{\mathbb{R}^3}\frac{\sqrt{1+r^2}}{r}\Big|\widehat{\widehat{f}d\sigma_{r}}(-x)\Big|^{2}w(x)dxdr\nonumber\\
&=\int_{0}^{\infty} \frac{\sqrt{1+r^2}}{r}
\Big\| \widehat{\widehat{f}d\sigma_{r}}(-x)\Big\|_{L_x^2(w)}^2dr.
\end{align}

Finally, we apply the following weighted $L^2$ restriction estimate for the Fourier transform:
\begin{equation*}
\big\|\widehat{fd\sigma_r}\big\|_{L^{2}(w)} \lesssim r^{1/2}\|w\|_{\mathcal{KS}}^{1/2}\|f\|_{L^{2}(\mathbb{S}_r^{2})},
\end{equation*}
 which can be found in Corollary 3.2 in \cite{S} (see also \cite{CR,CS,BBCRV}).
Applying this to the right-hand side of \eqref{homopf1}, we have
\begin{align*}
\big\|e^{it\langle D \rangle}f\big\|_{L_{x,t}^2(w)}^{2}
&\lesssim \int_{0}^{\infty} \sqrt{1+r^2} \|w\|_{\mathcal{KS}}\int_{\mathbb{S}_r^{2}} |\widehat{f}(r\sigma)|^{2}d\sigma_{r}dr\\
&= \|w\|_{\mathcal{KS}} \int_{0}^{\infty}\int_{\mathbb{S}_r^{2}}|(1+r^2)^{1/4} \widehat{f}(r\sigma)|^{2}d\sigma_r dr\\
&= \|w\|_{\mathcal{KS}} \|\langle D \rangle^{1/2}f\|_{L^2}^2
\end{align*}
as desired.
\end{proof}


\begin{thebibliography}{99}

\bibitem{BBCRV} J. A. Barcel\`o, J. M. Bennett, A. Carbery, A. Ruiz and M. C. Vilela, \textit{A note on weighted estimates for the Schr\"odinger operator}, Rev. Mat. Complut. 21 (2008), 481-488.


\bibitem{CS} S. Chanillo and E. Sawyer, \textit{Unique continuation for $\Delta + v$ and the C. Fefferman-Phong class},
Trans. Amer. Math. Soc. 318 (1990), 275-300.

\bibitem{CR} F. Chiarenza and A. Ruiz, \textit{Uniform $L^2$-weighted Sobolev inequalities}, Proc. Amer. Math. Soc. 112 (1991), 53-64.


\bibitem{DAF} P. D'Ancona and L. Fanelli, \textit{Strichartz and smoothing estimates for dispersive equations with magnetic potentials},
Comm. Partial Differential Equations 33 (2008), 1082-1112.

\bibitem{K} T. Kato, \textit{Wave operators and similarity for some non-selfadjoint operators}, Math. Ann. 162 (1966), 258-279.

\bibitem{KeS} R. Kerman and E. Sawyer, \textit{The trace inequality and eigenvalue estimates for Schr\"odinger operators},
Ann. Inst. Fourier (Grenoble) 36 (1986), 207-228.


\bibitem{RV} A. Ruiz and L. Vega, \textit{Local regularity of solutions to wave equations with time-dependent potentials}, Duke Math. J. 76 (1994), 913-940.

\bibitem{S} I. Seo, \textit{From resolvent estimates to unique continuation for the Schr\"odinger equation},
Trans. Amer. Math. Soc. 368 (2016), 8755-8784.

\bibitem{S2} I. Seo, \textit{A note on the Schr\"odinger smoothing effect},
Math. Nachr. 292 (2019), 2481–2487.

\bibitem{Si} B. Simon, \textit{Schr\"odinger semigroups}, Bull. Amer. Math. Soc. 7 (1982), 447-526.


\bibitem{Th} B. Thaller, \textit{The Dirac equation}, Texts and Monographs in Physics. Springer-Verlag, Berlin, 1992.



\end{thebibliography}
\end{document}